\DeclareMathOperator{\Spec}{Spec}
\DeclareMathOperator{\Proj}{Proj}
\newcommand\ddfrac[2]{\frac{\displaystyle #1}{\displaystyle #2}}
\newcommand{\Supp}{\textup{Supp}}
\newcommand{\charac}{\textup{char}}
\newcommand{\pr}{\textup{pr}}
\newcommand{\Fr}{\textup{Fr}}
\newcommand{\id}{\textup{id}}
\newcommand{\Pic}{\textup{Pic}}
\newcommand\reallywidehat[1]{%
\savestack{\tmpbox}{\stretchto{%
  \scaleto{%
    \scalerel*[\widthof{\ensuremath{#1}}]{\kern-.6pt\bigwedge\kern-.6pt}%
    {\rule[-\textheight/2]{1ex}{\textheight}}
  }{\textheight}%
}{0.5ex}}%
\stackon[1pt]{#1}{\tmpbox}%
}
\newcommand{\bA}{\mathbb{A}}
\newcommand{\cO}{\mathcal{O}}
\newcommand{\bP}{\mathbb{P}}
\newcommand{\bQ}{\mathbb{Q}}
\newtheorem{theorem}{Theorem}
\newtheorem{lemma}[theorem]{Lemma}
\theoremstyle{definition}
\newtheorem{remark}[theorem]{Remark}
\title{Invariance of plurigenera fails in positive and mixed characteristic}
\author{Iacopo Brivio}
\address{ National Center for Theoretical Sciences, Taipei, 106, Taiwan}
\email{ibrivio@ncts.ntu.edu.tw}
\date{}
\begin{document}

\subjclass[2010]{14D06, 14J27}
\keywords{positive and mixed characteristic, plurigenera, Frobenius, elliptic surfaces}
\begin{abstract}
We construct smooth families of elliptic surface pairs with terminal singularities over a DVR of positive or mixed characteristic $(X,B)\to \Spec R$, such that $P_m(X_k,B_k)>P_m(X_K,B_K)$ for all sufficiently divisible $m>0$. In particular, this shows that invariance of all sufficiently divisible plurigenera does not follow from the MMP and Abundance Conjectures.
\end{abstract}

\maketitle


\section{Introduction}

A famous theorem by Siu (\cite{Siu, Siu2}) states that, if $X\to U$ is a smooth one-parameter family of complex projective varieties, the plurigenera of the fibers $P_m(X_u): =h^0(X_u,mK_{X_u})$ are independent of $u\in U$, for all $m\geq 0$. This result, and its generalizations to the log category (\cite{BP}), play an important role in the construction of moduli spaces for varieties of general type (\cite{HMX}). It is worth noticing that Siu's argument is analytic and to this day there is no algebraic proof of \cite{Siu2}. Notable exceptions are the cases of families of threefolds (\cite{KMFlips}) and of varieties of general type (\cite{Kaw1}).  However, it has been known for a while that invariance of plurigenera follows from the Minimal Model and Abundance Conjectures (\cite{Nak}), at least over the complex numbers.

Much less is known when $U$ is replaced by $\Spec R$, where $R$ is a DVR of positive or mixed characteristic, even for families of surfaces. By \cite[Theorem 9.1]{KU} invariance of Kodaira dimension holds. However, invariance of \textit{all} plurigenera can fail: in \cite{La, KU} the authors constructed families of Enriques, resp. elliptic surfaces $X\to\Spec R$ such that $P_1(X_k)>P_1(X_K)$. In \cite{Suh} it is shown that the same can happen even when $K_X$ is ample. In all these examples however, the equality $P_m(X_k)=P_m(X_K)$ holds for all sufficiently divisible $m\geq 0$. Even better, by \cite{EH} we have that if $(X,B)\to \Spec R$ is a log smooth family of klt surface pairs, then $P_m(X_k,B_k)=P_m(X_K,B_K)$ holds for all sufficiently divisible $m\geq 0$ except, \textit{possibly}, when $\kappa(K_X+B/R)=1$ and $B_k$ is vertical with respect to the Iitaka fibration of $K_{X_k}+B_k$. It is then natural to ask whether the above equality always holds for all such $m$. We show this is not the case.

\begin{theorem}\label{thm: main result}
For every prime $p$, there exist smooth families of minimal elliptic surface pairs of Kodaira dimension one with terminal singularities $(X,B)\to\Spec R$, where $R$ is an excellent DVR, with algebraically closed residue field $k$ of characteristic $p>0$ and fraction field $K$, such that 
\[
P_m(X_k,B_k)>P_m(X_K,B_K)
\]
for all sufficiently divisible $m>0$. Furthermore, for every $p$ and every sufficiently divisible $m>0$, the difference $P_m(X_k,B_k)-P_m(X_K,B_K)$ can be arbitrarily large.
\end{theorem}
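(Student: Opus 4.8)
The strategy is to build the example directly inside the exceptional locus identified by \cite{EH}: a family of minimal elliptic surfaces whose relative Iitaka fibration $f\colon X\to C$ over $\Spec R$ has Kodaira dimension one, with a vertical boundary $B$, in which a single fibre of $f$ is a \emph{tame} multiple fibre over the generic point but a \emph{wild} multiple fibre over the closed point. Since $p\mid m_0$ is forced for wildness, the natural choice is a multiple fibre of multiplicity $m_0=p$ (or a $p$-power); in the mixed characteristic case $X_K$ lives over a field of characteristic zero and is automatically tame, so only the special fibre can be wild.

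First I would reduce the plurigenera to the base curve. By the projection formula and Leray,
\[
P_m(X_s,B_s)=h^0\!\big(C_s,\ f_{s*}\cO_{X_s}(\rounddown{m(K_{X_s}+B_s)})\big),\qquad s\in\{K,k\}.
\]
Writing the log canonical bundle formula $K_{X_s}+B_s\sim_\bQ f_s^*(K_{C_s}+L_s^B)$, over the generic point the fibration is tame, so for a multiple fibre $f_s^*Q=m_0\Gamma$ one has the naive identity $f_{s*}\cO_{X_s}(a\Gamma)=\cO_{C_s}(\rounddown{a/m_0}Q)$, and $P_m(X_K,B_K)$ is computed by an honest Riemann--Roch on $C_K$ of degree $m\deg(K_{C_K}+L_K^B)$ for all sufficiently divisible $m$. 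The whole point is that this identity \emph{fails} at a wild fibre: there the nilpotent thickenings $\cO_{a\Gamma}$ carry anomalous global sections, $h^0(\cO_{a\Gamma})>1$ for suitable $a<m_0$, so $f_{k*}\cO_{X_k}(a\Gamma)$ is strictly larger than $\cO_{C_k}(\rounddown{a/m_0}Q)$, \emph{without} any change in the numerical multiplicity and hence in the class of $K_X+B$. Since $h^0$ is upper semicontinuous we already know $P_m(X_k,B_k)\ge P_m(X_K,B_K)$, so the content of the theorem is strictness; moreover $\chi(X_s,\rounddown{m(K_{X_s}+B_s)})$ is constant by flatness, hence any excess in $h^0$ on the special fibre is matched by an equal excess in $h^1$, which is perfectly consistent.

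To produce the wild fibre I would realize $X$ as the quotient of a smooth genus-one fibration by a free action of a finite (possibly infinitesimal) group scheme such as $\mu_p$, $\bZ/p$, or $\alpha_p$ over $R$, or as a purely inseparable/Frobenius base change of a fixed Weierstrass fibration, arranged so that the fibre over a chosen section becomes multiple of multiplicity $p$, tame generically but wild on $X_k$. The vertical $\bQ$-boundary $B$ is then chosen small enough that $(X,B)$ is terminal with $X/R$ smooth, supported on fibres so that $B_k$ is vertical for the Iitaka fibration, and with coefficients tuned so that $\kappa(K_X+B/R)=1$ --- exactly the configuration the estimate of \cite{EH} leaves open. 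The wild defect $\ell_m:=\deg f_{k*}\cO_{X_k}(\rounddown{m(K_{X_k}+B_k)})-\deg\cO_{C_k}(\rounddown{m(K_{C_k}+L_k^B)})$ is then computed locally at the wild fibre, and one shows $\ell_m>0$ for all sufficiently divisible $m$, giving $P_m(X_k,B_k)-P_m(X_K,B_K)=\ell_m>0$. To make the difference arbitrarily large I would either raise the multiplicity to $p^e$ and iterate (increasing the wild contribution $\ell_m$) or take fibre products and base changes of $C$ introducing many wild fibres, so that the positive contributions add up without bound.

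The main obstacle is twofold. The delicate geometric point is to exhibit a single family over $R$ that is simultaneously smooth with $(X,B)$ terminal, and relatively minimal over both the generic and the special point, with the multiple fibre genuinely tame on one side and wild on the other; a group-scheme quotient tends to introduce singularities or extra fibre components that would have to be contracted, and one must check that no such contraction is forced on only one of the two fibres. The delicate arithmetic point is the exact local computation of the wild contribution $\ell_m$ in mixed characteristic: one must control the torsion order of the normal bundle $\cO_\Gamma(\Gamma)$ in $\Pic^0(\Gamma)$ and the anomalous sections $h^0(\cO_{a\Gamma})$ of the nilpotent thickenings, and prove that the resulting excess is strictly positive for all sufficiently divisible $m$ and grows unboundedly under the amplification --- none of which is visible to the numerical class of $K_X+B$, which is locally constant in the family.
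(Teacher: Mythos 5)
Your construction targets the right exceptional case of \cite{EH}, but the mechanism you propose for the jump cannot work, and the obstruction is exactly the paper's Lemma \ref{l-SFiffINV}. In your setup the special fibre $f_k\colon X_k\to C_k$ is still an honest elliptic surface: wildness of one multiple fibre changes the torsion in $R^1f_{k*}\cO_{X_k}$, but it does not change $f_{k*}\cO_{X_k}=\cO_{C_k}$, since the general fibre of $f_k$ remains reduced. By Lemma \ref{l-SFiffINV}, for a relatively semiample $K_X+B$ with Iitaka fibration $f$, the equality $f_{k*}\cO_{X_k}=\cO_{Z_k}$ \emph{forces} $P_m(X_k,B_k)=P_m(X_K,B_K)$ for all sufficiently divisible $m$. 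Concretely: for sufficiently divisible $m$ the divisor $m(K_{X_u}+B_u)\sim mf_u^*A_u$ is an honest pullback (no rounding), so the projection formula gives $P_m(X_u,B_u)=h^0\bigl(C_u,\cO_{C_u}(mA_u)\bigr)$, which for large $m\deg A_u$ depends only on $\deg A_u$; and $\deg A_u$ is constant in $u$, because for a relatively ample $H$ both $(K_{X_u}+B_u)\cdot H_u$ and the fibre degree $F_u\cdot H_u$ are constant by flatness (here you use that the general fibres are reduced on \emph{both} sides, so they lie in the same flat family $f^*z$). So your ``wild defect'' $\ell_m$ vanishes identically for sufficiently divisible $m$: the anomalous sections $h^0(\cO_{a\Gamma})>1$ only influence the rounding terms $\rounddown{ma_i/m_i}$ in Theorem \ref{CBF}, i.e. small or non-divisible $m$. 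This is precisely why the examples of \cite{KU,La}, which are built on exactly your tame-to-wild degeneration, give $P_1(X_k)>P_1(X_K)$ but equality at all sufficiently divisible $m$ --- the distinction the paper's introduction draws. (Relatedly, a tame fibre cannot simply specialize to a wild one of the same multiplicity with all else fixed: the wild contribution $t\geq 1$ would raise $\deg A_k$ above $\deg A_K$, contradicting the flatness constraint; something else, e.g.\ collisions of multiple fibres or changes of multiplicity, must compensate, and after compensation the total degree, hence the asymptotic plurigenus, is unchanged.)

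What the theorem actually requires --- and what the paper does --- is to break the Stein factorization on the special fibre, i.e.\ to make the \emph{generic} fibre of $f_k$ non-reduced. The paper takes a $q$-cyclic cover ($q=p^n$) of $E\times_R\bP^1_R$ branched along $\mathbf{1}_M\boxtimes TS^{q-1}$, where $M$ is a $q$-torsion line bundle on a family of elliptic curves $E/R$ that is non-trivial on $E_{\overline{K}}$ but trivializes on $E_k$. Over $K$ this produces a genuine degree-$q$ cover and $f_{K*}\cO_{X_K}=\cO_{\bP^1_K}$, while over $k$ the cover degenerates to a purely inseparable one, so $f_k$ factors as $X_k\to(\bP^1_k)^{(-n)}\xrightarrow{\Fr^n}\bP^1_k$. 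Then $K_{X_k}+B_k\sim_{\bQ}\overline{f_k}^*(qA_k)$: the base degree is multiplied by $q$, giving $P_m(X_k,B_k)=qmd+1>md+1=P_m(X_K,B_K)$ for all sufficiently divisible $m$, with arbitrarily large jumps as $n\to\infty$. You do list inseparable quotients/Frobenius base change among possible constructions, but only as a device to create a wild fibre; the essential missing idea is that the jump must come from this global Frobenius factor in the Stein factorization of the Iitaka fibration, not from any local invariant of a single wild fibre.
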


Note that, over the complex numbers, invariance of all sufficiently divisible plurigenera holds for smooth families of terminal pairs of non-negative Kodaira dimension (\cite{Paun}). When the family has relative dimension two, this can be quickly shown using the MMP and Abundance Conjecture.

\textbf{Acknowledgments}:  I would like to thank my PhD advisor, Prof. James M\textsuperscript{c}Kernan, for his support and guidance that made this work possible. I have also benefitted from discussions with Prof. Michael M\textsuperscript{c}Quillan and Prof. Jungkai A. Chen, to whom I extend my gratitude. I would also like to thank Prof. Christopher Hacon for reading a preliminary version of this paper, and Fabio Bernasconi for many useful discussions. Lastly, I would like to thank Prof. J\'anos Koll\'ar for carefully reading this paper and for his precious feedback. The author has been supported by NSF research grants no: 1265263 and no: 1802460, and by a grant from the Simons Foundation \#409187.

\textbf{Note from the author}: the previous version of this preprint claimed that Theorem \ref{thm: main result} held even in the absence of a boundary divisor. I have been informed by Prof. J\'anos Koll\'ar that the last part of my construction contains a mistake. We will address the boundary-free case in a future (version of this) paper.

\section{Preliminaries}

\subsection{Notation and conventions}

\begin{itemize}
\item All schemes we consider will be of finite type over their bases.
\item $R$ denotes an excellent DVR with algebraically closed residue field $k$ of characteristic $p>0$, fraction field $K$, and uniformizer $\varpi$.
\item Let $F$ be a field: a \textit{variety} is an integral and separated $F$-scheme. We usually assume our varieties to be normal. A \textit{family of varieties} is an integral $R$-scheme $X\to \Spec R$ whose fibers are varieties. The family is said to be smooth, resp. projective, if $X$ is $R$-smooth, resp. $R$-projective.
\item A \textit{pair} $(X,B)$ consists of a normal integral scheme $X$ with an effective $\bQ$-divisor $B$ such that $K_X+B$ is $\bQ$-Cartier. We refer to \cite{KolSMMP} for the various definitions of singularities of pairs. A \textit{family of pairs} is a pair $(X,B)\to \Spec R$ such that $(X_u,B_u)$ is a pair for all $u\in\Spec R$. The family is said to be smooth, resp. projective, if $X$ is $R$-smooth, resp. $R$-projective.
\item If $(X,B)$ is a projective pair over a field $F$, we denote by $P_m(X,B): =h^0(X,\lfloor m(K_X+B)\rfloor)$ the \textit{m-plurigenus} of $(X,B)$.
\item We consider $\bP^1_R$ with homogeneous coordinates $[S:T]$. We denote by $\bA^1_{R,s},\bA^1_{R,t}\subset\bP^1_R$ the affine open sets $\lbrace T\neq 0\rbrace$ and $\lbrace S\neq 0\rbrace$, respectively. The distinguished $R$-points $\lbrace T=0\rbrace$ and $\lbrace S=0\rbrace$ are denoted by $0$ and $\infty$, respectively.
\item Let $X$ be an $R$-scheme: we denote by $X_k$, $X_K$ and $X_{\overline{K}}$, the special, the generic, and the geometric generic fiber of $X\to \Spec R$, respectively. An analogous notation will be used for sheaves over $X$ and their sections.
\end{itemize}

\subsection{Elliptic surfaces}

Let $F$ be a field: a morphism of smooth projective $F$-varieties $f\colon X\to C$ is an \textit{elliptic surface} if $\dim(X)=2$, $\dim(C)=1$, $f_\ast\cO_X=\cO_C$, and a general fiber is a smooth curve of genus 1. The elliptic surface is said to be \textit{minimal} if the fibers of $f$ do not contain any $(-1)$-curve. Let $f^\ast(c_i)=m_iD_i$ be the multiple singular fibers, where $m_i$ is the gcd of the coefficients of the components of $f^\ast(c_i)$. Since $C$ is a smooth curve, we have a decomposition
\[
R^1f_\ast\cO_X=L\oplus T
\]
where $L$ is a line bundle and $T$ is torsion. We will denote by $t$ the length of $T$. The fibers over $\Supp (T)$ are called \textit{wild} fibers; all of them are multiple (\cite[Proposition 3]{BM}). A multiple fiber which is not wild is called \textit{tame}.

\begin{theorem}[{\cite{BM}}]\label{CBF}
Let $f\colon X\to C$ be a minimal elliptic surface. Then
\[
K_X\sim_{\bQ}f^\ast\left(K_C-L+\sum_i\frac{a_i}{m_i}c_i\right),
\]
where $\deg(-L)=\chi(X,\cO_X)+t$ and $0\leq a_i\leq m_i-1$, with $a_i=m_i-1$ if and only if $f^\ast(c_i)$ is tame. Moreover, $T$ is supported precisely at those points $c\in C$ such that $h^1(f^{-1}(c),\cO_{f^{-1}(c)})>1$.
\end{theorem}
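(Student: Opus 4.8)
The plan is to work throughout with the relative dualizing sheaf $\omega_{X/C} = \cO_X(K_X - f^\ast K_C)$, which is a line bundle since $X$ is smooth and $f$ is flat with Gorenstein (Cartier divisor) fibers, and to reduce the stated formula for $K_X$ to an analysis of $\omega_{X/C}$. First I would settle the degree assertion. Relative duality for the Gorenstein morphism $f$ (together with $f_\ast\cO_X = \cO_C$) identifies $f_\ast\omega_{X/C}$ with the dual $(R^1f_\ast\cO_X)^\vee$ of the coherent sheaf $R^1f_\ast\cO_X$; since $R^1f_\ast\cO_X = L\oplus T$ with $T$ torsion, the torsion dualizes to zero and $f_\ast\omega_{X/C}\cong L^{-1} = \cO_C(-L)$. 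To compute $\deg(-L)$ I would feed the decomposition into the Leray spectral sequence, which degenerates because $R^if_\ast\cO_X = 0$ for $i\geq 2$, giving $\chi(X,\cO_X) = \chi(C,\cO_C) - \chi(C,L) - \chi(C,T)$. Riemann--Roch on the curve $C$ evaluates the three terms as $1-g(C)$, $\deg L + 1 - g(C)$ and $\length(T)=t$, and rearranging yields exactly $\deg(-L) = \chi(X,\cO_X)+t$.

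Next I would produce the divisorial formula. The evaluation map $f^\ast f_\ast\omega_{X/C}\to\omega_{X/C}$, i.e. $f^\ast\cO_C(-L)\to\omega_{X/C}$, is a nonzero map of line bundles, since $\omega_{X/C}$ is trivial on a general fiber, hence injective with effective cokernel divisor $E$; equivalently there is a canonical section of $\omega_{X/C}\otimes f^\ast\cO_C(L)$ vanishing along $E$. Restricting to fibers shows $E$ is vertical: on a reduced fiber $F$ (even a degenerate one) adjunction gives $\omega_{X/C}|_F\cong\omega_F\cong\cO_F$ and the section restricts to the nowhere-vanishing constant, so $E$ can only involve the multiple fibers; relative minimality, which forces $K_X\cdot\Gamma=0$ for every vertical curve $\Gamma$, is what rules out further vertical contributions. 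Writing $E = \sum_i a_i D_i$ and using $m_iD_i = f^\ast(c_i)$ turns $\omega_{X/C}\sim f^\ast(-L) + \sum_i a_iD_i$ into $K_X\sim_{\bQ}f^\ast\big(K_C - L + \sum_i\tfrac{a_i}{m_i}c_i\big)$. The bound $a_i\geq 0$ is the effectivity of $E$, and $a_i\leq m_i-1$ follows because $f_\ast\big(\omega_{X/C}\otimes f^\ast\cO_C(L)\big)=\cO_C$ carries the section to the constant $1$: were $a_i\geq m_i$, the section would lie in $H^0\big(\omega_{X/C}\otimes f^\ast\cO_C(L)\otimes f^\ast\cO_C(-c_i)\big)$ (using $\cO_X(-m_iD_i)=f^\ast\cO_C(-c_i)$), whose pushforward is $\cO_C(-c_i)$, forcing $1$ to vanish at $c_i$, a contradiction.

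The \emph{main obstacle} is the exact determination of $a_i$ at each multiple fiber, and in particular the equivalence ``$a_i = m_i-1$ iff the fiber is tame.'' For a multiple fiber $f^\ast(c_i)=m_iD_i$ I would study the normal bundle $N_i = \cO_X(D_i)|_{D_i}$: since $m_iD_i$ is a fiber, $N_i^{\otimes m_i}\cong\cO_{D_i}$, so $N_i$ is torsion in $\Pic(D_i)$ of some order $\mu_i\mid m_i$, and the intersection computation $D_i^2 = 0 = D_i\cdot K_X$ gives $p_a(D_i)=1$. Running the infinitesimal neighborhood sequences $0\to N_i^{-j}\to\cO_{(j+1)D_i}\to\cO_{jD_i}\to 0$ and using that a nontrivial degree-zero line bundle on the arithmetic-genus-one curve $D_i$ is acyclic, one sees that $h^0$ and $h^1$ of $\cO_{jD_i}$ can only jump at the multiples of $\mu_i$. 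When $\mu_i = m_i$ there is a single such step, $h^1(F_i,\cO_{F_i})=1$, the fiber is tame, and the same bookkeeping transported to $\omega_{X/C}$ pins the vanishing order at $a_i = m_i-1$; when $\mu_i<m_i$ (which forces $p\mid m_i$) extra jumps occur, $h^1(F_i,\cO_{F_i})>1$, the fiber is wild, and the canonical section is forced to vanish along $D_i$ to order strictly less than $m_i-1$. I expect the cohomological comparison between the filtration on $\cO_{jD_i}$ and that on $\omega_{X/C}|_{jD_i}$, via relative duality on the thickened fibers, to be the technical heart of the argument.

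Finally, the statement about $T$ follows from the same circle of ideas. By cohomology and base change the locally free part $L$ of $R^1f_\ast\cO_X$ has rank $h^1(X_{\overline{K}},\cO_{X_{\overline{K}}})=1$, so the torsion $T$ is supported exactly at the points $c\in C$ where $h^1(F_c,\cO_{F_c})$ exceeds the generic value $1$; these are precisely the wild fibers identified above, and its total length is $t$.
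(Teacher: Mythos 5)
First, a point of comparison: the paper does not prove this statement at all --- Theorem \ref{CBF} is quoted verbatim from Bombieri--Mumford \cite{BM} as a known input to the construction. So your attempt can only be measured against the classical proof, which is essentially what you are reconstructing.

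Your first two paragraphs are correct and follow the standard line. The identification $f_\ast\omega_{X/C}\cong(R^1f_\ast\cO_X)^\vee\cong L^{-1}$ by relative duality, the Leray/Riemann--Roch computation giving $\deg(-L)=\chi(X,\cO_X)+t$, and the analysis of the evaluation map $f^\ast f_\ast\omega_{X/C}\to\omega_{X/C}$ giving $K_X\sim f^\ast(K_C-L)+\sum_i a_iD_i$ with $0\le a_i\le m_i-1$ are all sound, including the pushforward argument for $a_i\le m_i-1$. One detail you gloss over: ruling out vertical contributions over \emph{non-multiple but non-reduced} fibers (e.g.\ additive Kodaira types, where the gcd of the multiplicities is $1$ but the fiber is not reduced) is not covered by your reduced-fiber adjunction argument; it needs Zariski's lemma --- a vertical divisor having zero intersection with every component of a fiber is a rational multiple of that fiber --- combined with $K_X\cdot\Gamma=0$ from minimality and integrality of the coefficients. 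You name minimality as the ingredient, which is right, but the actual deduction runs through Zariski's lemma.

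The genuine gap is in your third paragraph, i.e.\ precisely in the ``iff'' of the statement. Your normal-bundle strategy can be completed in one direction: granting that constants always lift along $\cO_{(j+1)D_i}\to\cO_{jD_i}$ (so the first connecting map kills them) and that the restriction maps on $H^1$ are surjective (their cokernels inject into $H^2$ of a sheaf on a curve, which vanishes), one gets tame $\Leftrightarrow\mu_i=m_i$; and adjunction together with $\omega_{D_i}\cong\cO_{D_i}$ (triviality of the dualizing sheaf of an indecomposable curve of canonical type, which you should state) gives $N_i^{a_i+1}\cong\cO_{D_i}$, i.e.\ $\mu_i\mid a_i+1$, whence tame $\Rightarrow a_i=m_i-1$. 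But the converse --- wild $\Rightarrow a_i<m_i-1$ --- does \emph{not} follow from anything you have set up: for a wild fiber $\mu_i$ properly divides $m_i$, so the divisibility $\mu_i\mid a_i+1$ is perfectly consistent with $a_i+1=m_i$, and no counting of jumps in $h^0(\cO_{jD_i})$ excludes it. At this point you simply assert that ``the canonical section is forced to vanish along $D_i$ to order strictly less than $m_i-1$'' and defer the justification to an unspecified ``technical heart.'' That deferred step is exactly the hard half of Bombieri--Mumford's theorem: it requires relating the length of the torsion of $R^1f_\ast\cO_X$ at $c_i$ to the behavior of $\omega_{X/C}$ on the formal neighbourhood of the wild fiber (via duality on the thickenings $jD_i$), and without it the stated equivalence, and hence the theorem as quoted, is not proven. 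The final paragraph on $\Supp(T)$ is fine once you invoke the base-change isomorphism $R^1f_\ast\cO_X\otimes k(c)\cong H^1(F_c,\cO_{F_c})$, which holds because $R^2f_\ast\cO_X=0$ for a fibration in curves.
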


\subsection{Iitaka fibration and invariance of plurigenera} Let $X$ be a normal, integral, and projective $R$-scheme, let $D$ be an effective $\bQ$-Cartier divisor, and consider the rational maps of $R$-schemes
\[
\phi_{|mD|}\colon X\dashrightarrow Z_m\subset \bP H^0(X,mD)^*,
\]
where $Z_m$ denotes the image of $\phi_{|mD|}$. By \cite[Sections 2.1.A, 2.1.B]{La1}, for all $m>0$ sufficiently divisible the maps $\phi_{|mD|}$ are birational to a fixed morphism $\phi_\infty\colon X_\infty\to Z_\infty/R$, called the \textit{Iitaka fibration of $D$}, satisfying $\phi_{\infty,\ast}\cO_{X_\infty}=\cO_{Z_\infty}$. The \textit{Iitaka dimension of $D$ over $R$} is defined to be $\kappa(D/R): =\dim_R Z_\infty$; note that we always have 
\[
\kappa(D/R)=\kappa(D_K)\hspace{10mm}\kappa(D/R)\leq\kappa(D_k)
\]
\noindent
by upper-semicontinuity of cohomology. If $(X,B)$ is a pair over $R$, the \textit{Kodaira dimension of $(X,B)$} is defined to be $\kappa(X,B/R): =\kappa(K_X+B/R)$. We will usually assume $D$ to be \textit{semiample}, i.e. $mD$ is basepoint-free for some $m>0$. The section ring $R(D): =\bigoplus_{m\geq 0}H^0(X,mD)$ is then a finitely generated $R$-algebra and, for all sufficiently divisible $m>0$,
\[
\phi_m=\phi_\infty\colon X\to Z: =\Proj R(D)
\]
\noindent
is the Iitaka fibration of $D$. Note that, if $D$ is semiample, $\kappa(D/R)=\kappa(D_k)$.

The following Lemma is the key to the construction of examples violating invariance of $P_m(X_u,B_u)$ for all sufficiently divisible $m>0$.

\begin{lemma}\label{l-SFiffINV}
Let $X\to\Spec R$ be a projective family of normal varieties, let $D$ be a semiample $\bQ$-Cartier $\bQ$-divisor on $X$, and let $f\colon X\to Z$ be its Iitaka fibration. Then $f_{k,*}\cO_{X_k}=\cO_{Z_k}$ if and only if $h^0(X_u,mD_u)$ is independent of $u\in\Spec R$ for all $m\geq 0$ sufficiently divisible.
\end{lemma}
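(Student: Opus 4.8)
The plan is to compare the two plurigenera directly by pushing everything down to $Z$ and isolating the discrepancy in a single coherent sheaf on $Z_k$. First I would record the relevant structure: since $D$ is semiample and $Z=\Proj R(D)$, for all sufficiently divisible $m$ the tautological sheaf $A_m:=\cO_Z(m)$ is a relatively ample line bundle with $\cO_X(mD)=f^\ast A_m$, one has $f_\ast\cO_X=\cO_Z$, and $Z\to\Spec R$ is flat (because each $H^0(X,mD)$ injects into $H^0(X_K,mD_K)$, so $R(D)$ is torsion-free over $R$) and projective. As $\Spec R$ has only the two points $k$ and $K$, the asserted independence is exactly the equality $P(k,m):=h^0(X_k,mD_k)=h^0(X_K,mD_K)=:P(K,m)$ for all sufficiently divisible (hence, after enlarging, arbitrarily large) $m$. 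On the generic fiber flat base change gives $f_{K,\ast}\cO_{X_K}=\cO_{Z_K}$, so the projection formula together with Serre vanishing and flatness of $Z/R$ yields, for large $m$,
\[
P(K,m)=h^0(Z_K,(A_m)_K)=\chi(Z_K,(A_m)_K)=\chi(Z_k,(A_m)_k)=h^0(Z_k,(A_m)_k).
\]
On the special fiber the projection formula gives instead $P(k,m)=h^0\!\big(Z_k,(f_k)_\ast\cO_{X_k}\otimes (A_m)_k\big)$, so everything hinges on $\cG:=(f_k)_\ast\cO_{X_k}$ and the natural map $\cO_{Z_k}\to\cG$.

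The key step is to identify the cokernel of $\cO_{Z_k}\to\cG$. I would apply $f_\ast$ to the sequence $0\to\cO_X\xrightarrow{\varpi}\cO_X\to\cO_{X_k}\to 0$ (here $\varpi$ is a nonzerodivisor by flatness of $X/R$). Using $f_\ast\cO_X=\cO_Z$, the identification $\cO_{Z_k}=\cO_Z/\varpi$, and the identity $f_\ast\cO_{X_k}=(f_k)_\ast\cO_{X_k}$ as $\cO_{Z_k}$-modules (which follows from $f\circ i_{X_k}=i_{Z_k}\circ f_k$), the long exact sequence collapses to
\[
0\to\cO_{Z_k}\to\cG\to (R^1f_\ast\cO_X)[\varpi]\to 0,
\]
where $[\varpi]$ denotes the $\varpi$-torsion subsheaf. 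In particular $\cO_{Z_k}\to\cG$ is always injective, and its cokernel is the coherent $\cO_{Z_k}$-module $\cC:=(R^1f_\ast\cO_X)[\varpi]$ (coherent since $f$ is projective, and an $\cO_{Z_k}$-module since it is killed by $\varpi$).

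Finally I would tensor this sequence with the ample line bundle $(A_m)_k$ and take cohomology; for $m$ large, Serre vanishing kills $H^1(Z_k,(A_m)_k)$, so
\[
P(k,m)=h^0(Z_k,(A_m)_k)+h^0(Z_k,\cC\otimes (A_m)_k)=P(K,m)+h^0(Z_k,\cC\otimes (A_m)_k).
\]
Thus $P(k,m)-P(K,m)=h^0(Z_k,\cC\otimes(A_m)_k)\ge 0$ (which also recovers upper-semicontinuity), and this vanishes for all sufficiently divisible $m$ if and only if $\cC=0$: indeed, if $\cC\neq 0$ then $\cC\otimes(A_m)_k$ is globally generated with nonzero sections for $m\gg 0$. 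Since by the displayed sequence $\cC=0$ is equivalent to $\cO_{Z_k}\xrightarrow{\sim}\cG=(f_k)_\ast\cO_{X_k}$, both implications of the Lemma follow at once.

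I expect the main obstacle to be the cohomological bookkeeping of the second paragraph: correctly identifying $f_\ast\cO_{X_k}$ with $(f_k)_\ast\cO_{X_k}$ as an $\cO_{Z_k}$-module and pinning the cokernel down to the $\varpi$-torsion of $R^1f_\ast\cO_X$. Everything downstream is then a formal consequence of Serre vanishing and the flatness of $Z\to\Spec R$; the only point requiring a little care is that \emph{sufficiently divisible} must be read so as to also permit $m$ large enough for the vanishing theorems to apply, which is harmless since the two fibers impose only finitely many lower bounds on $m$.
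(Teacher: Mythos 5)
Your proof is correct, and it takes a genuinely different route from the paper's, most visibly in the forward implication (constancy of $h^0$ implies $f_{k,*}\cO_{X_k}=\cO_{Z_k}$). The paper argues softly there: over the reduced base $\Spec R$, constancy of $h^0(X_u,mD_u)$ gives (by semicontinuity/Grauert) surjectivity of the restriction map $H^0(X,mD)\otimes k\to H^0(X_k,mD_k)$, so $f_k$ is the morphism defined by the complete linear system $|mD_k|$ for every sufficiently divisible $m$, i.e.\ the Iitaka fibration of $D_k$, and connectedness of its fibers comes for free — no computation of any defect sheaf is needed. You instead apply $f_*$ to $0\to\cO_X\xrightarrow{\varpi}\cO_X\to\cO_{X_k}\to 0$ and identify the cokernel of the natural injection $\cO_{Z_k}\hookrightarrow (f_k)_*\cO_{X_k}$ with $\cC:=(R^1f_*\cO_X)[\varpi]$, packaging both implications into the single identity
\[
h^0(X_k,mD_k)-h^0(X_K,mD_K)=h^0\bigl(Z_k,\cC\otimes(A_m)_k\bigr),
\]
with the nontrivial direction concluded by Serre global generation; your reverse implication then coincides in substance with the paper's (projection formula, Serre vanishing, constancy of Euler characteristics by flatness of $Z$). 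What your route buys is a quantitative refinement: the jump in plurigenera is exactly the dimension of sections of twists of the $\varpi$-torsion in $R^1f_*\cO_X$, which recovers semicontinuity as a by-product and pinpoints the source of the failure — pleasingly parallel to the role of the torsion $T\subset R^1f_*\cO_X$ for wild fibers in Theorem \ref{CBF}. The one point to make explicit in a final write-up is the normalization of \virgolette{sufficiently divisible}: fix $m_0$ (after passing to a Veronese subring of $R(D)$ if necessary) so that $A_{m_0}$ is an ample line bundle with $A_{jm_0}=A_{m_0}^{\otimes j}$, and let $m$ range over multiples of $Nm_0$ with $N\gg 0$, so that divisibility also forces the high powers of a fixed ample bundle needed for Serre vanishing and global generation; you flag this yourself, and the same convention is implicit in the paper's proof.
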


\begin{proof}
If $h^0(X_u,mD_u)$ is independent of $u\in\Spec R$ for all $m\geq 0$ sufficiently divisible, then for all such $m$ we have surjectivity of the restriction map
\[
H^0(X,mD)\otimes k\to H^0(X_k,mD_k),
\]
thus $f_k$ is the Iitaka fibration of $D_k$. For the reverse implication, write $D\sim_{\bQ}f^*A$ for some ample $\bQ$-divisor on $Z$ and observe that for all $m> 0$ sufficiently divisible and all $u\in\Spec R$ we have $h^0(X_u,mD_u)=\chi (Z_u,mA_u)$ by the projection formula and Serre vanishing. As $Z$ is integral, it is flat over $R$, hence $\chi (Z_u,mA_u)$ is independent of $u\in \Spec R$.
\end{proof}

\begin{remark}\label{r-reducedfiber}
Let $F$ be a field, let $f\colon X\to C$ be a morphism with connected fibers between smooth projective $F$-varieties, and suppose $C$ is a curve. Then we have a Stein factorization
\[
f\colon X\xrightarrow{\overline{f}}\overline{C}\xrightarrow{h}C
\]
where $h$ is a universal homeomorphism. If $\charac (F)=0$ then $h$ is an isomorphism by Zariski's Main Theorem. If $\charac (F)=p>0$ then $h$ is a composition of geometric Frobenius morphisms, hence $f_*\cO_X=\cO_C$ if and only if a general fiber of $f$ is reduced.
\end{remark}

\section{Proof of Theorem \ref{thm: main result}}

We fix an integer $n\geq 1$ and set $q: =p^n$. Let $E\to \Spec R$ be a family of elliptic curves such that $\vert\Pic(E_{\overline{K}})[q]\vert >\vert\Pic(E_k)[q]\vert$. Note that this condition is always satisfied when $R$ is of mixed characteristic, while in equicharacteristic $p$ we may consider an ordinary elliptic curve degenerating to a supersingular one. After possibly replacing $R$ by a finite extension we may then assume that there exists a non-trivial $q$-torsion line bundle $M$ on $E$ such that $M_k=\cO_{E_k}$. Denote by $\mathbf{1}_M\in H^0(E,M^q)$ a nowhere vanishing section.

Consider now the following commutative diagram of $R$-schemes
\begin{center}
\begin{tikzcd}
Y \arrow[d, "\pi",swap] & X \arrow[d, "f"] \arrow[l,"\nu",swap]\\
E\times_R\bP^1_R \arrow[r,"\pr_2"] & \bP_R^1,
\end{tikzcd}
\end{center}
where $\pi$ is the $q$-cyclic cover branched over $\mathbf{1}_M\boxtimes TS^{q-1}\in H^0(E\times_R\bP^1_R,M^q\boxtimes \cO_{\bP^1_R}(q))$, and $\nu$ is the normalization. Over $\bA^1_{R,t}$ we have the following description
\begin{center}
\begin{tikzcd}
\Spec\ddfrac{\cO_E[t,\lambda]}{(\lambda^q-\varphi t)} \arrow[d] & \Spec\ddfrac{\cO_E[t,\lambda]}{(\lambda^q-\varphi t)} \arrow[d] \arrow[l,"=",swap]\\
\Spec\cO_E[t,\lambda] \arrow[r] & \Spec R[t],
\end{tikzcd}
\end{center}
while over $\bA^1_{R,s}$ we have
\begin{center}
\begin{tikzcd}
\Spec\ddfrac{\cO_E[s,\xi]}{(\xi^q-\psi s^{q-1})} \arrow[d] & \Spec\ddfrac{\cO_E[s/\xi,\xi]}{(\xi-\psi (s/\xi)^{q-1})} \arrow[d] \arrow[l]\\
\Spec\cO_E[s,\xi] \arrow[r] & \Spec R[s].
\end{tikzcd}
\end{center}
Here $\varphi,\psi\in\cO^*_E$ are local trivializations of $\mathbf{1}_M$, hence $\varphi_k,\psi_k\in k^*$. It is straightforward to verify that $X$ is smooth over $R$. A general fiber of $f_K$ is the smooth $q$-cover $F_K\to E_K$ induced by $\mathbf{1}_{M_K}\in H^0(E_K,M_K^q)$. By Remark \ref{r-reducedfiber} we then have that $f_{K,*}\cO_{X_K}=\cO_{\bP^1_K}$, hence $f_{*}\cO_{X}=\cO_{\bP^1_R}$ as Stein factorization and flat base-change commute. From the equations we see that the function field extension induced by $f_k$ factors as
\[
k(X_k)\hookleftarrow k(\bP^1_k)^{\frac{1}{q}}\hookleftarrow k(\bP^1_k),
\]
which in turn yields a non-trivial Stein factorization
\[
f_k\colon X_k\xrightarrow{\overline{f_k}}(\bP^1_k)^{(-n)}\xrightarrow{\Fr^n}\bP^1_k.
\]
Observe that $f_K\colon X_K\to\bP^1_K$ is an isotrivial elliptic surface with multiple fibers $qE_K$ over $0_K,\infty_K$. We claim both these fibers are tame: note that we can find isomorphic neighborhoods of $f_K^{-1}(0_K)$ and $f_K^{-1}(\infty_K)$, thus one fiber is tame if and only if the other one is. Note also that $h^1(X_K,\cO_{C_K})=1$. By contradiction, suppose both fibers are wild. By \cite[Theorem III.12.11]{Har} the natural map
\[
R^1f_{K,*}\cO_{X_K}\otimes K(b)\to H^1(X_{K,b},\cO_{X_{K,b}})
\]
is surjective for all $b\in\bP^1_K$ and, as $f_{K,*}\cO_{X_K}=\cO_{\bP^1_K}$, the Leray spectral sequence yields
\[
1=h^1(X_K,\cO_{X_K})=h^0(\bP^1_K,R^1f_{K,*}\cO_{X_K})\geq h^0(\bP^1_K,(R^1f_{K,*}\cO_{X_K})_{\textup{tor}})\geq 4.
\]
As $\chi(X_K,\cO_{X_K})=0$, Theorem \ref{CBF} implies $K_{X_K}\sim_{\bQ}f_K^*(-2+2(q-1)/q)H_K$, where $H$ is an hyperplane on $\bP^1_R$, thus $K_{X}\sim_{\bQ}f^*(-2+2(q-1)/q)H$, as $X_k$ is irreducible.

Let now $z_1,...,z_l$ be pairwise disjoint $R$-points of $\bP_R^1$, not intersecting $0$ or $\infty$, and let $0<\epsilon\ll 1$ be a rational number such that, setting $B: =f^*(\epsilon\sum_i z_i)$, we have that $(X_u,B_u)$ is terminal for all $u\in \Spec R$. For such choice of $\lbrace z_i\rbrace_i$ we have that $\epsilon$ is independent of $l$. Thus upon taking very large $l$ we may also assume that $K_X+B\sim_{\bQ}f^*A$ for some ample $\bQ$-divisor on $\bP^1_R$ of degree $d$. In particular, $f$ is the Iitaka fibration of $K_X+B$, thus $P_m(X_u,B_u)$ will jump for all sufficiently divisible $m>0$, by Lemma \ref{l-SFiffINV}. On the special elliptic surface we have $K_{X_k}+B_k\sim_{\bQ}\overline{f_k}^*(qA)$, hence the projection formula on $f_K$ and $\overline{f_k}$ yields
\[
P_m(X_k,B_k)=h^0(\bP^1_k,\cO_{\bP^1_k}(qmd))>h^0(\bP^1_K,\cO_{\bP^1_K}(md))=P_m(X_K,B_K).
\]
As $q=p^n$ we see that, for every characteristic $p$ and every $m\geq 1$ divisible enough, the jump in plurigenera can be arbitrarily large, thus concluding the proof.

\begin{remark}
By taking products we can construct smooth, higher-dimensional families of terminal pairs $(W,D)\to \Spec R$ with $K_W+D$ semiample and $0< \kappa(W,D/R)<\dim_R W$ such that invariance of all sufficiently divisible plurigenera fails. Consider smooth families of Abelian, resp. canonically polarized varieties, $A\to \Spec R$ and $V\to \Spec R$. Then
\[
g: =f\times\pr_2\colon (W,D): =(X\times_R A\times_R V,B\times_R A\times_R V)\to\bP^1_R\times_R V
\]
is the Iitaka fibration of $K_W+D$. By construction $g_k$ has a non-trivial Stein factorization
\[
g_k\colon W_k\xrightarrow{\overline{f_k}\times\pr_{2,k}}\left(\bP^1_k\right)^{(-n)}\times V_k\xrightarrow{\Fr^n\times\id_{V_k}}\bP^1_k\times V_k,
\]
hence $P_m(W_k,B_k)-P_m(W_K,B_K)>0$ can be arbitrarily large for all sufficiently divisible $m$, as in the surface case.
\end{remark}

\bibliographystyle{alpha}
\bibliography{bib.bib}
\end{document}